\newtheorem{thm}{Theorem}
\newtheorem{cor}[thm]{Corollary}
\newtheorem{lem}[thm]{Lemma}
\theoremstyle{definition}
\newtheorem{rem}[thm]{Remark}
\begin{document}

\baselineskip=17pt

\title[determinantal  inequality]{An extension of Harnack type determinantal inequality\footnote{To appear in Linear and Multilinear Algebra.}}

\author[M. Lin]{Minghua Lin}
\address{Department of Mathematics \\
  Shanghai University\\ Shanghai, 200444, China}
\email{m\_lin@i.shu.edu.cn}
 \author[F. Zhang]{Fuzhen Zhang}
 \address{Department of Mathematics\\
 Nova Southeastern University\\
 Fort Lauderdale, FL 33314, USA}
 \email{zhang@nova.edu}

\date{}

\begin{abstract} We revisit and comment on the Harnack type determinantal inequality for contractive matrices obtained by Tung in the sixtieth and give an extension of  the inequality involving  multiple positive semidefinite matrices.
  \end{abstract}

\subjclass[2010]{15A45, 15A60}

\keywords{determinantal inequality, Harnack inequality, positive semidefinite matrix}

\maketitle

{\em In memory of Marvin Marcus.}

\bigskip

In 1964,
Tung \cite{Tun64}  established the following Harnack type determinantal  inequality.

\begin{thm}\label{thm1}
 Let $Z$ be an $n\times n$ complex matrix with singular values $r_k$  that satisfy  $0\le r_k<1$,   $k=1, 2, \ldots, n$ (i.e., $Z$ is a strict contraction). Let $Z^*$ denote the conjugate transpose of $Z$ and $I$ be the $n\times n$ identity matrix.  Then for any $n\times n$ unitary matrix \nolinebreak  $U$
\begin{eqnarray}\label{e1}
\prod_{k=1}^n\frac{1-r_k}{1+r_k}\le \frac{\det(I-Z^*Z)}{|\det(I-UZ)|^2}\le \prod_{k=1}^n\frac{1+r_k}{1-r_k}.
\end{eqnarray}
\end{thm}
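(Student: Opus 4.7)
The plan is to simplify the ratio in \eqref{e1}, reduce the statement to a one-sided bound on $|\det(I-UZ)|$, and then pass from the eigenvalues of $UZ$ to the singular values $r_k$ via Weyl's log-majorization. The first step is immediate: since $Z^*Z$ has eigenvalues $r_k^2$, one has $\det(I-Z^*Z)=\prod_{k}(1-r_k^2)=\prod_{k}(1-r_k)(1+r_k)$, so both halves of \eqref{e1} are equivalent to
\[
\prod_{k=1}^n(1-r_k) \;\le\; |\det(I-UZ)| \;\le\; \prod_{k=1}^n(1+r_k).
\]

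To attack this reformulation, I would let $\lambda_1,\dots,\lambda_n$ denote the eigenvalues of $UZ$ and write $|\det(I-UZ)|=\prod_{k}|1-\lambda_k|$. Because $U$ is unitary, $UZ$ has the same singular values $r_1,\dots,r_n$ as $Z$, so $\|UZ\|_{\mathrm{op}}=\max_k r_k<1$ and hence $|\lambda_k|<1$ for every $k$. The triangle inequality $1-|\lambda_k|\le|1-\lambda_k|\le 1+|\lambda_k|$ then multiplies out to
\[
\prod_{k=1}^n(1-|\lambda_k|) \;\le\; |\det(I-UZ)| \;\le\; \prod_{k=1}^n(1+|\lambda_k|).
\]

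To replace $|\lambda_k|$ by $r_k$, I would invoke Weyl's classical log-majorization: with both $(|\lambda_k|)$ and $(r_k)$ in decreasing order, $\prod_{k=1}^m|\lambda_k|\le\prod_{k=1}^m r_k$ for every $m=1,\dots,n$. The two auxiliary functions $\varphi_+(t)=\log(1+e^t)$ and $\varphi_-(t)=-\log(1-e^t)$ are increasing and convex on $\mathbb{R}$ and on $(-\infty,0)$ respectively (a one-line second-derivative check), and the Hardy--Littlewood--P\'olya principle says that a weakly log-majorized sequence is still weakly majorized after any such function is applied. Consequently
\[
\prod_{k=1}^n(1+|\lambda_k|)\;\le\;\prod_{k=1}^n(1+r_k) \quad\text{and}\quad \prod_{k=1}^n(1-|\lambda_k|)\;\ge\;\prod_{k=1}^n(1-r_k),
\]
which combined with the preceding display yields both halves of \eqref{e1}.

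I expect the main obstacle to be the lower bound rather than the upper one. The inequality $|\det(I+A)|\le\prod_k(1+r_k(A))$ is a familiar Weyl-type consequence obtained from $\varphi_+$, but to lower-bound $|\det(I-UZ)|$ one must choose the less common convex function $\varphi_-(t)=-\log(1-e^t)$ and verify that its composition with $\exp$ really is convex; the strict contraction hypothesis $r_k<1$ is what keeps every factor $(1-r_k)$ and $(1-|\lambda_k|)$ positive and every logarithm finite, and even a single $r_k=1$ collapses the lower bound to the trivial $0\le|\det(I-UZ)|$.
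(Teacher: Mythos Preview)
Your proof is correct and is essentially the same as the approach the paper describes and uses: you reduce \eqref{e1} to the Marcus reformulation $\prod_k(1-r_k)\le|\det(I-UZ)|\le\prod_k(1+r_k)$, apply the triangle inequality to $|1-\lambda_k|$, and then pass from $|\lambda_k(UZ)|$ to $\sigma_k(Z)=r_k$ via Weyl's log-majorization together with the increasing convex functions $t\mapsto\ln(1+e^t)$ and $t\mapsto-\ln(1-e^t)$, exactly as in the paper's Lemma and its proof of Theorem~\ref{thm2}. The only cosmetic gap is that you do not explicitly handle the case where some $|\lambda_k|=0$ (so $\log|\lambda_k|=-\infty$); the paper disposes of this by a routine continuity argument.
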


 Soon after the appearance of the Tung's paper, Marcus \cite{Mar65}  gave another proof of (\ref{e1}) and pointed out that (\ref{e1}) is equivalent to
  \begin{eqnarray}\label{e2z}
 \prod_{k=1}^n(1-r_k)\le |\det(I-A)| \le \prod_{k=1}^n(1+r_k)
 \end{eqnarray} for any  $n\times n$   matrix $A$ with the same singular values as the contractive matrix $Z$.

 Marcus's proof of (\ref{e2z}) makes use of majorization theory and singular value (eigenvalue)  inequalities of Weyl. This approach is still very fruitful today in deriving determinantal inequalities; see, for example, \cite{Lin15, Lin16}. At about the same time of Marcus's proof, Hua \cite{Hua65} gave a proof of    (\ref{e2z}) using the determinantal inequality
 he had previously obtained in \cite{Hua55}.
\newpage

\begin{rem}
{\rm We notice the following:
\begin{itemize}
\item[(i).]  In the well-known  book \cite{MOA11} of Marshall, Olkin and Arnold, Tung's theorem is cited as result E.3 on page 319 in which the condition that $A$ be contractive is missing; that is to say, Theorem \ref{thm1} need not be true in general if $Z$ is not contractive. Take, for example, $Z=2i I$ with  odd $n$ (the matrix size) and appropriate $U$.
     We see neither the left nor the right inequality in (\ref{e1})  holds. However,     the following inequality
    holds true for any $n\times n$ matrix $Z$ and any $n\times n$ unitary matrix $U$ (a fraction
  with  zero denominator is viewed as $\infty$)
    \begin{eqnarray*}
    \label{e1z} \prod_{k=1}^n\frac{|1-r_k|}{1+r_k}\le \frac{|\det(I-Z^*Z)|}{|\det(I-UZ)|^2}.
\end{eqnarray*}

    \item[(ii).]  Inequalities (\ref{e1}) and (\ref{e2z}) are not equivalent for general matrices. The right-hand side inequality in   (\ref{e2z})
     is true for all $n\times n$ matrices $A$; that is,
    \begin{eqnarray*}\label{e2}
  |\det(I-A)| \le \prod_{k=1}^n(1+r_k).
 \end{eqnarray*}
\end{itemize}
}
\end{rem}

Using the polar decomposition (see, e.g., \cite[p.~83]{Zha11}), we restate and slightly generalize Theorem~\ref{thm1}   as follows with discussions on the equality cases.

\begin{thm}\label{thm2} Let $Z$ be an $n\times n$ positive semidefinite matrix with eigenvalues $r_1, r_2, \dots, r_n$. Let $U$ be an $n\times n$ unitary matrix such that $I-UZ$ is nonsingular. \nolinebreak Then
\begin{eqnarray}\label{e3a}
\prod_{k=1}^n\frac{|1-r_k|}{1+r_k}\le \frac{|\det(I-Z^2)|}{|\det(I-UZ)|^2}
\end{eqnarray}
with equality if and only if $Z$ has an eigenvalue $1$ or $UZ$ has   eigenvalues
  $-r_1, -r_2,$ $ \dots, -r_n$. If both $Z$ and $I-Z$ are nonsingular,
   the strict inequality
 holds for $U\neq -I$.

Moreover, if   $0\le r_k<1$,   $k=1, 2, \ldots, n$, then
\begin{eqnarray}\label{e3b} \frac{\det(I-Z^2)}{|\det(I-UZ)|^2}\le \prod_{k=1}^n\frac{1+r_k}{1-r_k}
\end{eqnarray}
with equality if and only if $UZ$ has  eigenvalues
 $r_1, r_2, \dots, r_n$. If $Z$ is nonsingular, then the strict inequality  in (\ref{e3b}) holds if $U\neq I$.
\end{thm}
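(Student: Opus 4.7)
The plan is to reduce both (\ref{e3a}) and (\ref{e3b}) to the Marcus--Tung bounds in (\ref{e2z}) applied to $A=UZ$. The point is that $Z$ being positive semidefinite with eigenvalues $r_k$ gives
$$\det(I-Z^2)=\prod_{k=1}^n(1-r_k)(1+r_k),$$
so $|\det(I-Z^2)|=\prod_k|1-r_k|\cdot\prod_k(1+r_k)$. After cancellation (with the case that some $r_k=1$ treated separately, since then both sides of (\ref{e3a}) vanish because $|\det(I-UZ)|>0$ by hypothesis), (\ref{e3a}) is equivalent to $|\det(I-UZ)|\le\prod_k(1+r_k)$, and (\ref{e3b}) is equivalent to $|\det(I-UZ)|\ge\prod_k(1-r_k)$. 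Since $U$ is unitary, the singular values of $UZ$ equal those of $Z$, namely the $r_k$; the first bound is then the upper inequality in (\ref{e2z}), which as noted in the remark holds for every matrix, and the second is the lower inequality, which applies because $UZ$ is a strict contraction under $r_k<1$.

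For the equality analysis, I would factor each bound as a chain through the eigenvalues $\lambda_1,\dots,\lambda_n$ of $M:=UZ$:
$$|\det(I-M)|=\prod_{k=1}^n|1-\lambda_k|\;\le\;\prod_{k=1}^n(1+|\lambda_k|)\;\le\;\prod_{k=1}^n(1+r_k),$$
and the dual chain with $1-|\cdot|$ and $1-r_k$ for the lower bound. The first step is entry-wise: $|1-\lambda|\le 1+|\lambda|$ with equality iff $\lambda$ is real and non-positive, and $|1-\lambda|\ge 1-|\lambda|$ with equality iff $\lambda\in[0,1)$. The second step is Weyl's weak log-majorization of $(|\lambda_k(M)|)$ by $(s_k(M))=(r_k)$, which is ordinary log-majorization here because $|\det M|=\prod r_k$; applying the strictly convex functions $t\mapsto\log(1+e^t)$ and $t\mapsto-\log(1-e^t)$ (on $t<0$) and invoking Karamata then forces $\{|\lambda_k(M)|\}=\{r_k\}$ as multisets in the equality case. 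Combining the two steps, equality in (\ref{e3a}) (assuming no $r_k=1$) is equivalent to the eigenvalues of $UZ$ being $-r_1,\dots,-r_n$, and equality in (\ref{e3b}) is equivalent to them being $r_1,\dots,r_n$.

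To conclude $U=\mp I$ under the nonsingularity hypotheses, I use a trace computation. Suppose equality in (\ref{e3a}) holds, with $Z$ and $I-Z$ both nonsingular, so $r_k>0$ and $r_k\ne 1$ for every $k$; by the previous paragraph the eigenvalues of $UZ$ are $-r_1,\dots,-r_n$. Diagonalize $Z=WDW^*$ with $D=\operatorname{diag}(r_1,\dots,r_n)$; then
$$\operatorname{tr}(UZ)=\operatorname{tr}\bigl((W^*UW)D\bigr)=\sum_{k=1}^n(W^*UW)_{kk}\,r_k=-\sum_{k=1}^n r_k,$$
so $\sum_k\bigl((W^*UW)_{kk}+1\bigr)r_k=0$. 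Since $W^*UW$ is unitary, each diagonal entry lies in the closed unit disk, whence $\operatorname{Re}(W^*UW)_{kk}+1\ge 0$; as all $r_k>0$, this forces $(W^*UW)_{kk}=-1$ for every $k$, and then the unit norm of each row of the unitary $W^*UW$ forces its off-diagonal entries to vanish, giving $W^*UW=-I$ and $U=-I$. The equality case of (\ref{e3b}) is handled by the parallel argument with $+1$, yielding $U=I$. I expect the main obstacle to be the second step of the Weyl chain: pinning down the equality case of the log-majorization bound $\prod(1+|\lambda_k|)\le\prod(1+r_k)$ (and its $1-$ analogue) cleanly enough to extract the precise eigenvalue characterization, since the rest of the argument is then a direct trace calculation.
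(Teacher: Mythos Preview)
Your proposal is correct and tracks the paper's proof closely: the same reduction of (\ref{e3a}) and (\ref{e3b}) to the two halves of (\ref{e2z}) for $A=UZ$, the same chain $\prod|1-\lambda_k|\le\prod(1+|\lambda_k|)\le\prod(1+r_k)$ via Weyl's log-majorization $|\lambda(UZ)|\prec_{\log}\sigma(Z)$, and the same strict-convexity argument with $t\mapsto\ln(1+e^t)$ and $t\mapsto-\ln(1-e^t)$ to pin down the equality case (the paper isolates this last step as a standalone lemma, but the content is identical to your Karamata remark).

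The one genuine divergence is the final step, deducing $U=\pm I$ when $Z$ is nonsingular. The paper argues as follows (for (\ref{e3b}), say): once ${\rm Spec}(UZ)=\{r_1,\dots,r_n\}$, the matrix $P:=UZ$ has its eigenvalues equal to its singular values, hence is normal and therefore positive definite; then $U=PZ^{-1}$ has only positive eigenvalues, and a unitary with positive spectrum must be $I$. Your trace computation --- diagonalize $Z=WDW^*$, equate $\mathrm{tr}(UZ)=\sum_k(W^*UW)_{kk}r_k$ with $\pm\sum_k r_k$, and use that the diagonal entries of a unitary lie in the closed unit disk --- reaches the same conclusion without invoking the ``eigenvalues $=$ singular values $\Rightarrow$ normal'' fact. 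Your route is more self-contained; the paper's is a touch shorter but leans on that auxiliary characterization. (One small polish: in your trace step, take real parts explicitly before concluding each summand vanishes.)
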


If we use ${\rm Spec}(X)$ to denote the spectrum of the matrix $X$, then
equality holds in (\ref{e3a}) if and only if $1\in {\rm Spec}(Z)$ or
${\rm Spec} (UZ)={\rm Spec} (-Z)$; and
equality holds in (\ref{e3b}) if and only if
${\rm Spec} (UZ)={\rm Spec} (Z)$.

To prove this theorem, we need a lemma which is of interest in its own right. We proceed with
adoption of standard notation in majorization theory (see, e.g., \cite{Zha11}). 

\begin{lem}
Let $x=(x_1, x_2, \dots, x_n)$ and   $y=(y_1, y_2, \dots, y_n) $ be nonnegative vectors and assume that
$y$ is not a permutation of $x$ (i.e., the multisets
$\{x_1, x_2, \dots, x_n\}$ and $\{y_1, y_2, \dots, y_n\}$ are not equal). Denote
$\tilde{z}=(1+z_1, 1+z_2, \dots, 1+z_n)$.  We have:
$$\mbox{If\quad  $x\prec_{\rm log} y,$ \quad then \quad
$\tilde{x} \prec_{\rm \rm wlog} \tilde{y}$ \;\; {but}\quad
$\tilde{x} \not \prec_{\rm log} \tilde{y}$}. $$
Consequently,
\begin{equation}\label{ineq1}
\prod_{k=1}^n(1+x_k)< \prod_{k=1}^n(1+y_k).
\end{equation}
\end{lem}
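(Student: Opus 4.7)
My plan is to pass to logarithms and exploit the function $g(t) := \log(1+e^t)$, which is strictly increasing and strictly convex on $\mathbb{R}$ (its second derivative $e^t/(1+e^t)^2$ is positive). The key identity $g(\log z) = \log(1+z)$ for $z>0$ means that after taking logarithms both conclusions of the lemma reduce to comparisons between $\sum_k g(\log x_k)$ and $\sum_k g(\log y_k)$ and their partial-sum analogues.

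Assume first that all $x_k, y_k$ are strictly positive, so that $x \prec_{\log} y$ becomes the additive majorization $\log x \prec \log y$ on $\mathbb{R}^n$. For the weak log-majorization I will invoke the standard fact that an increasing convex function preserves weak majorization: since $\log x \prec \log y$ implies $\log x \prec_w \log y$, applying this to $g$ yields $g(\log x) \prec_w g(\log y)$. Because $g$ is monotone, the $k$-th largest component of $g(\log x)$ is $g(\log x_{[k]}) = \log(1+x_{[k]})$, and exponentiating the resulting partial-sum inequalities gives $\prod_{k=1}^m(1+x_{[k]}) \le \prod_{k=1}^m(1+y_{[k]})$ for every $m$, i.e., $\tilde{x} \prec_{\mathrm{wlog}} \tilde{y}$.

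For the strict inequality (\ref{ineq1}), the strict convexity of $g$ makes $u \mapsto \sum_k g(u_k)$ strictly Schur convex on $\mathbb{R}^n$. Since $y$ is not a permutation of $x$, neither is $\log y$ a permutation of $\log x$, and the full majorization $\log x \prec \log y$ then forces $\sum_k g(\log x_k) < \sum_k g(\log y_k)$, which is precisely (\ref{ineq1}) after exponentiation. Because $\prec_{\log}$ demands equality at the full product, this strict inequality also delivers $\tilde{x} \not\prec_{\log} \tilde{y}$.

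The main obstacle is removing the positivity assumption. From the equality $\prod_{k=1}^n x_{[k]} = \prod_{k=1}^n y_{[k]}$ one sees that the number $s$ of zero components of $x$ must be at least the number $t$ of zero components of $y$. Extending $g$ continuously by $g(-\infty) := 0$ keeps it monotone and convex, so the weak-log-majorization argument carries over verbatim in the extended setting. For the strict inequality, a brief case split suffices: when $s > t$, the weak inequality on the top $n-s$ positive entries is supplied by the positive case, and the extra factors $1 + y_{[k]} > 1$ for $n-s < k \le n-t$ provide the strictness; when $s = t \ge 1$, the positive parts of $x$ and $y$ have equal length, inherit $\prec_{\mathrm{wlog}}$, and still satisfy the ``not a permutation'' condition, so a standard weak-to-full decomposition (dominating $\log y'$ componentwise by a $v'$ with $\log x' \prec v'$) reduces matters to the strict-Schur-convexity argument of the positive case. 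I expect this last reduction to be the only real technical nuisance, the underlying ingredients being routine.
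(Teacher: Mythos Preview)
Your approach is essentially identical to the paper's: both pass to $\log x \prec \log y$ and apply the strictly increasing, strictly convex function $g(t)=\log(1+e^t)$ to obtain the weak majorization $\log\tilde{x}\prec_w\log\tilde{y}$ and the strict total-sum inequality via strict Schur convexity. The only difference is in disposing of possible zero components: the paper simply says ``replace the $0$'s by sufficiently small positive numbers and use a continuity argument,'' whereas you carry out an explicit case split ($s>t$ versus $s=t$) together with the standard decomposition of weak majorization into a componentwise dominance plus a full majorization. Your treatment is more careful here---the paper's continuity remark is terse and, since strict inequalities do not pass to limits, really requires exactly the kind of analysis you sketch---but the underlying idea and the key function are the same.
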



\begin{proof}  Without loss of generality, we assume that $x$ and $y$ are positive vectors. (Otherwise,
replace the 0's in $x$ and $y$ by sufficiently small positive numbers and use continuity argument.)   Since  $x\prec_{\rm log} y$, we have
$\ln x \prec\ln y$  (see, e.g., \cite[p.~344]{Zha11}).
Let $f(t)=\ln (1+e^t)$. Then $f$ is strictly
 increasing and convex.
By \cite[Theorem~10.12, p.~342]{Zha11}, we have
$f(\ln x)\prec_{\rm w}  f(\ln y)$; that is,
$\ln \tilde{x} \prec_{\rm w} \ln \tilde{y}$, i.e.,
$\tilde{x} \prec_{\rm wlog} \tilde{y}   $.
Since $x$ is not a permutation of $y$,
$\ln x$ is not a permutation of $\ln y$.
Applying  \cite[Theorem~10.14, p.~343]{Zha11}, we obtain
$\sum_{k=1}^nf(\ln x_k) <   \sum_{k=1}^nf(\ln y_k)$, namely, $\sum_{k=1}^n\ln (1+x_k) <
 \sum_{k=1}^n\ln (1+y_k)$  which yields
$\prod_{k=1}^n(1+x_k)< \prod_{k=1}^n(1+y_k). $ \end{proof}

Under the lemma's condition, if
all $x_i$ and $y_i$ are further  in $[0, 1)$, a similar but reversal log-majorization inequality can be derived for $(1-x_k)$ and $(1-y_k)$. In particular,
\begin{equation}\label{ineq2}
 \prod_{k=1}^n(1-x_k)> \prod_{k=1}^n(1-y_k).
\end{equation}

In fact, this can be proved by applying \cite[Theorem 10.14, p.~343]{Zha11}
to $\ln x \prec \ln y$ with  $g(t)=-\ln (1-e^t)$, which is strictly increasing and convex when $t\in (-\infty, 0)$.

\medskip

\noindent
{\bf \emph{Proof of Theorem 3.}}
We only need to show the equality cases. For (\ref{e3a}), if $Z$ has a singular (eigen-) value 1, then both sides vanish. If $UZ$ has eigenvalues $-r_1, -r_2, \dots, -r_n$, then  $\det (I-UZ)=\prod_{k=1}^n(1+r_k)$. Equality is readily seen.  Conversely, suppose equality occurs in (\ref{e3a}). We further assume that no $r_k$
($k=1, 2, \dots, n$) equals $1$.
Since  $|\det(I-Z^2)|=\prod_{k=1}^n|1-r_k|(1+r_k)$, we have
\begin{equation}\label{ze1}
|\det (I-UZ)|=\prod_{k=1}^n(1+r_k).
\end{equation}
Moreover, by Weyl majorization inequality (see, e.g.,
\cite[Corollary 10.2, p.~353]{Zha11}),
 $$|\lambda (UZ)| \prec_{\rm log} \sigma (UZ)=\sigma(Z)=\lambda  (Z),$$
where $\lambda (X)$ and $\sigma (X)$ denote the vectors of the eigenvalues and singular values of matrix $X$, respectively.
With $\lambda_k(X)$ denoting the eigenvalues of the $n\times n$ matrix $X$, $k=1, 2, \dots, n$, by the lemma,  we have
$$0<|\det (I-UZ)|=\prod_{k=1}^n|1-\lambda_k(UZ)|
\leq  \prod_{k=1}^n(1+|\lambda_k(UZ)|)\leq  \prod_{k=1}^n(1+r_k).$$
Thus, (\ref{ze1}) yields $|1-\lambda_k(UZ)|
=  1+|\lambda_k(UZ)|$ for all $k$, which implies $\lambda_k(UZ)\leq 0$ for $k=1, 2, \dots, n$, i.e., all eigenvalues of $-UZ$ are nonnegative.
 If     $|\lambda (UZ)|=\lambda (-UZ)$ is not a permutation of
$\lambda  (Z)$, then, by strict inequality (\ref{ineq1}), we have
$\prod_{k=1}^n(1+|\lambda_k(UZ)|)< \prod_{k=1}^n(1+\lambda_k(Z))=\prod_{k=1}^n(1+r_k)$, a contradiction
to (\ref{ze1}). It follows that $UZ$ has the eigenvalues $-r_1, -r_2, \dots, -r_n$.

For the equality in (\ref{e3b}), it occurs if and only if
$\prod_{k=1}^n(1-r_k)=|\det (I-UZ)|$. Note that $|\lambda (UZ)|\prec_{\rm log}\sigma (Z)$ and
\begin{equation}\label{ze2}
\prod_{k=1}^n|1-\lambda_k(UZ)|\geq \prod_{k=1}^n(1-|\lambda_k(UZ)|)\geq
\prod_{k=1}^n(1-\sigma_k(Z))=\prod_{k=1}^n(1-r_k).
\end{equation}
The first equality in (\ref{ze2})  occurs if and only if all
$\lambda_k(UZ)$ are in $[0, 1)$; the second equality occurs if and only if $\lambda (UZ)$ is a permutation of $\sigma (Z)$, i.e., ${\rm Spec} (UZ)={\rm Spec} (Z)$.

Now assume that $Z$ is nonsingular and suppose that equality holds
in (\ref{e3b}). Then $UZ$ has  eigenvalues
 $r_1, r_2, \dots, r_n$. Moreover, the singular values of
 $UZ$ are $r_1, r_2, \dots, r_n$. Let $P=UZ$. Then the eigenvalues of $P$ are just the singular values of $P$. So $P$ is positive definite.
   It follows that $U=PZ^{-1}$ has only positive eigenvalues. Since $U$ is unitary,
 $U$ has to be the identity matrix. The  case for
 (\ref{e3a}) is similar.
\hfill $\Box$
\medskip

In what follows,
we prove an extension of  Theorem \ref{thm1} that involves multiple  matrices. 

  \begin{thm}\label{thm3} Let $Z_i$, $i=1, 2, \ldots, m$,  be $n\times n$ positive semidefinite matrices. Suppose that the  eigenvalues of $Z_i$ are  $r_{ik}$   satisfying $0\le r_{ik}<1$,   $k=1, 2, \ldots, n$. Then for any $n\times n$ unitary matrix $U$ and positive scalars  $w_i$, $i=1, 2, \ldots, m$,   $\sum_{i=1}^mw_i=1$, we have
  \begin{eqnarray}\label{e4}
  \prod_{k=1}^n\prod_{i=1}^m\left(\frac{1-r_{ik}}{1+r_{ik}}\right)^{w_i}\le \frac{\det(I-(\sum_{i=1}^mw_i Z_i)^2)}{|\det(I-U\sum_{i=1}^mw_iZ_i)|^2}\le  \prod_{k=1}^n\prod_{i=1}^m
  \left(\frac{1+r_{ik}}{1-r_{ik}}\right)^{w_i}.
  \end{eqnarray}
Equality on the left-hand side occurs if and only if all $Z_i$ are equal to $Z$, say, and
$Z$ has an eigenvalue 1 or Spec($UZ$)=Spec($-Z$) (in which $U=-I$ if $Z$ is nonsingular); Equality on the right-hand side  occurs if and only if all $Z_i$ are equal to $Z$, say, and Spec($UZ$)=Spec($Z$) (in which $U=I$ if $Z$ is nonsingular).
  \end{thm}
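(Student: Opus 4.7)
The plan is to set $W := \sum_{i=1}^m w_i Z_i$ and reduce everything to Theorem~\ref{thm2} applied to $W$. First I would verify that $W$ is positive semidefinite and, because each $\|Z_i\|=\max_k r_{ik}<1$, that $\|W\| \le \sum_{i=1}^m w_i \|Z_i\| < 1$. Hence the eigenvalues $s_1,\ldots,s_n$ of $W$ lie in $[0,1)$, which in particular makes $I-UW$ automatically nonsingular for every unitary $U$. Applying Theorem~\ref{thm2} to $W$ then gives
$$\prod_{k=1}^n\frac{1-s_k}{1+s_k} \;\le\; \frac{\det(I-W^2)}{|\det(I-UW)|^2} \;\le\; \prod_{k=1}^n\frac{1+s_k}{1-s_k}.$$

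Next, taking logarithms I observe that both halves of (\ref{e4}) collapse to the single scalar trace inequality
$$\mathrm{tr}\, g(W) \;\le\; \sum_{i=1}^m w_i\, \mathrm{tr}\, g(Z_i), \qquad g(x) := \ln\tfrac{1+x}{1-x}.$$
A direct computation yields $g'(x)=\frac{2}{1-x^2}$ and $g''(x)=\frac{4x}{(1-x^2)^2}\ge 0$, so $g$ is strictly convex on $[0,1)$. I would then invoke Klein's inequality: for Hermitian $A, B$ with spectra in $[0,1)$,
$$\mathrm{tr}\bigl(g(A) - g(B) - (A-B)g'(B)\bigr) \ge 0,$$
with equality if and only if $A=B$. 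Setting $A=Z_i$ and $B=W$, multiplying by $w_i$ and summing over $i$, the first order term drops out because $\sum_{i}w_i(Z_i-W)=0$, and the trace inequality follows.

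For the equality analysis, suppose equality holds in either half of (\ref{e4}). Then equality must occur both in the trace inequality and in the appropriate half of Theorem~\ref{thm2} applied to $W$. Strict convexity of $g$ together with the rigidity part of Klein's inequality forces $Z_i=W$ for every $i$, so all $Z_i$ coincide with a common matrix $Z$, and $W=Z$. Plugging this into the equality clauses of Theorem~\ref{thm2} yields exactly the stated conditions: $\mathrm{Spec}(UZ)=\mathrm{Spec}(-Z)$ (with $U=-I$ when $Z$ is nonsingular) on the left, and $\mathrm{Spec}(UZ)=\mathrm{Spec}(Z)$ (with $U=I$ when $Z$ is nonsingular) on the right. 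Conversely, either of these conditions with all $Z_i=Z$ is readily seen to produce equality.

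The main obstacle I anticipate is handling the equality statement cleanly: I must ensure that the strict convexity of $g$ is genuine (the isolated zero of $g''$ at $x=0$ is harmless, as strict convexity of $g$ on $[0,1)$ still follows), that Klein's equality case indeed forces matrix equality rather than only spectral equality, and that the two layers of rigidity (trace convexity plus Theorem~\ref{thm2}) combine to give precisely the stated conditions without spurious extra cases.
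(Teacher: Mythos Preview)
Your argument is correct and takes a genuinely different route from the paper's. Both proofs set $W=\sum_i w_i Z_i$, apply Theorem~\ref{thm2} to $W$, and then must show $\prod_k \frac{1+s_k}{1-s_k}\le\prod_{k,i}\bigl(\frac{1+r_{ik}}{1-r_{ik}}\bigr)^{w_i}$. The paper splits this into two scalar steps: first Fan's eigenvalue majorization $\lambda(W)\prec\sum_i w_i\lambda(Z_i)$ together with the monotone convexity of $g(t)=\ln\frac{1+t}{1-t}$, and then Lewent's inequality $\frac{1+\sum_i w_i x_i}{1-\sum_i w_i x_i}\le\prod_i\bigl(\frac{1+x_i}{1-x_i}\bigr)^{w_i}$ applied coordinatewise. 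Your Klein-inequality argument collapses both steps into a single trace-convexity estimate using only ordinary (scalar) convexity of $g$. The payoff is most visible in the equality analysis: the paper has to combine two rigidity layers (equality in Fan's relation forces simultaneous diagonalization with matching eigenvalue order, and equality in Lewent's forces equal spectra), whereas the strict-convexity equality case of Klein delivers $Z_i=W$ in one stroke. Your handling of the isolated zero of $g''$ at $0$ is fine, since $g'$ is still strictly increasing on $[0,1)$; and the ``$Z$ has an eigenvalue $1$'' clause in the stated left-hand equality condition is vacuous under the hypothesis $r_{ik}<1$, so omitting it does no harm.
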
	
  	Clearly, if $m=1$, then (\ref{e4}) reduces to (\ref{e1}).

\begin{proof} We begin by noticing the fact that for $n\times n$ Hermitian matrices $A$ and $B$, if $\lambda_k(A+B)=\lambda_k (A)+\lambda_k(B)$ for all $k=1, 2, \dots, n$, where $\lambda_k(X)$ denotes the $k$th largest eigenvalue of $X$,  then
  $A$ and $B$ are simultaneously unitarily diagonalizable with their eigenvalues on the main diagonal in the same order. (The converse is also true.) This is easily seen from  \cite[Theorem 2.4]{So94}.
  The result holds for multiple matrices.

Our additional ingredients in proving (\ref{e4})   are Fan's majorization relation that $\lambda (H+S)\prec \lambda (H)+\lambda (S)$ for $n\times n$ Hermitian matrices $H$ and $S$ (see, e.g., \cite[p.~356]{Zha11})  and Lewent's inequality  \cite{JPS07, Lew08} which asserts, for $x_i\in [0, 1)$, $i=1, 2, \dots, n$,
$$
\frac{1+\sum_{i=1}^n \alpha_i x_i}{1-\sum_{i=1}^n \alpha_i x_i} \leq \prod_{i=1}^n \left ( \frac{1+x_i}{1-x_i}\right )^{\alpha_i},$$
where $\sum_{i=1}^n\alpha_i=1$ and all $\alpha_i>0$.  Equality holds if and only if $x_1=x_2=\cdots =x_n.$

Let $r_{ik}^\downarrow$ be the $k$th largest eigenvalue of $Z_i$, $i=1, 2,  \ldots, m$, and  $s_k$ be the $k$th largest eigenvalue of $W:=\sum_{i=1}^mw_iZ_i$, $k=1, 2,  \ldots, n$. Fan's majorization relation implies
\begin{eqnarray*}
\lambda(W)\prec \sum_{i=1}^mw_i\lambda(Z_i), \;\;
\mbox{i.e.}, \;\; \sum_{k=1}^\ell s_k\le \sum_{k=1}^\ell \sum_{i=1}^mw_ir_{ik}^\downarrow, \qquad \ell=1, 2,  \ldots, n.
\end{eqnarray*}
(Note that the components of $\lambda (\cdot)$ are in nonincreasing order.)
 Now the convexity and the monotonicity of the function $f(t)=\ln \frac{1+t}{1-t}$, $0\le t<1$, imply  (see, e.g., \cite[p. 343]{Zha11})
  \begin{eqnarray*}
  \sum_{k=1}^n \ln\frac{1+s_k}{1-s_k}\le  \sum_{k=1}^n \ln\frac{1+  \sum_{i=1}^mw_ir_{ik}^\downarrow}{1- \sum_{i=1}^mw_ir_{ik}^\downarrow},
  \end{eqnarray*}
 where  equality holds  if and only if
  $s_k=\sum_{i=1}^m w_ir_{ik}^\downarrow$ for all $k$;
 that is, $\lambda(W)=\sum_{i=1}^mw_i\lambda(Z_i)$.
It follows that all $Z_i$ are simultaneously unitarily diagonalizable with their eigenvalues on the main diagonals in the same order (nonincreasing, say).

   Applying the exponential function to both sides and using Lewent's inequality  yield  \begin{eqnarray}\label{p1} \begin{aligned}
    \prod_{k=1}^n \frac{1+s_k}{1-s_k}&\le&  \prod_{k=1}^n \frac{1+  \sum_{i=1}^m w_ir_{ik}^\downarrow}{1-  \sum_{i=1}^mw_ir_{ik}^\downarrow}  \\&\le&
    \prod_{k=1}^n \prod_{i=1}^m \left(\frac{1+ r_{ik}^\downarrow}{1-  r_{ik}^\downarrow}\right)^{w_i} \\&=&
        \prod_{k=1}^n \prod_{i=1}^m \left(\frac{1+ r_{ik}}{1-  r_{ik}}\right)^{w_i}, \end{aligned}
    \end{eqnarray} in which equality occurs in the second inequality if and only if
    $r_{1k}=r_{2k}=\cdots =r_{mk}$ for $k=1, 2, \dots, n$.  Thus both equalities in (\ref{p1})
    hold if and only if $Z_1=Z_2=\cdots =Z_m$.

    By  (\ref{e3b}), we have
    \begin{eqnarray}\label{p2}
         \frac{\det(I-(\sum_{i=1}^mw_i Z_i)^2)}{|\det(I-U\sum_{i=1}^mw_iZ_i)|^2}\le   \prod_{k=1}^n  \left(\frac{1+ s_k}{1-  s_k}\right).
        \end{eqnarray}
        Combining (\ref{p1}) and  (\ref{p2}) gives  the second inequality of (\ref{e4}).

       Note that the inequalities in (\ref{p1}) reverse by taking reciprocals, which implies
       \begin{eqnarray}\label{p3}
           \prod_{k=1}^n \frac{1-s_k}{1+s_k}\ge
               \prod_{k=1}^n \prod_{i=1}^m \left(\frac{1-r_{ik}}{1+r_{ik}}\right)^{w_i}.
           \end{eqnarray} Then by (\ref{e3a}), we have
               \begin{eqnarray}\label{p4}
                    \frac{\det(I-(\sum_{i=1}^mw_i Z_i)^2)}{|\det(I-U\sum_{i=1}^mw_iZ_i)|^2}\ge   \prod_{k=1}^n  \left(\frac{1- s_k}{1+  s_k}\right).
                   \end{eqnarray} Combining (\ref{p3}) and  (\ref{p4}) yields the first inequality of (\ref{e4}).

                   If either equality holds in (\ref{e4}), then
                   all $Z_i$ are equal to $Z$, say. The conclusions are immediate from Theorem \ref{thm2}.
\end{proof}
 	
 	\begin{rem}
 {\rm
 	Interestingly, it is observed in \cite{JPS07} that the Lewent's inequality follows directly from the  convexity   of $\displaystyle{f(t)=\ln\frac{1+t}{1-t}}$, \  $0\le t<1$, applied to the Jensen's inequality. Note that equality occurs in the Lewent's inequality if and only if all the variables $x_1, x_2, \dots, x_n$ are identical
 (see, e.g., \cite[Theorem 2, p.~3]{Boll99}).
 }
 	\end{rem}

 The absolute value of a complex   matrix $Z$, denoted by $|Z|$,   is the positive semidefinite square root of $Z^*Z$.	
 The following result  extends the second inequality in (\ref{e1}).

 	\begin{cor} Let $Z_i$, $i=1,  2, \ldots, m$,  be $n\times n$ complex matrices with   singular values   $r_{ik}$  such  that $0\le r_{ik}<1$, $k=1, 2, \ldots, n$.   Then for any $n\times n$ unitary matrix $U$
 	  \begin{eqnarray*}
 	   \frac{\det(I-\sum_{i=1}^mw_i Z_i^*Z_i)}{|\det(I-U\sum_{i=1}^mw_i|Z_i|)|^2}\le  \prod_{k=1}^n\prod_{i=1}^m\left(\frac{1+r_{ik}}{1-r_{ik}}\right)^{w_i},
 	  \end{eqnarray*} where $w_i>0$, $i=1, 2, \ldots, m$, such that $\sum_{i=1}^mw_i=1$. Equality occurs if and only if all $Z_i$ have the same absolute value, say $Z$, and Spec($UZ$)=Spec($Z$) (in which $U=I$ if $Z$ is nonsingular).
 	\end{cor}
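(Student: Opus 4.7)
The plan is to reduce the corollary to Theorem~\ref{thm3} applied to the positive semidefinite matrices $|Z_i|$, whose eigenvalues are exactly the singular values $r_{ik}$ of $Z_i$. The denominator already matches, since the variable $\sum_i w_i |Z_i|$ in the corollary coincides with the corresponding expression in Theorem~\ref{thm3}. Only the numerator needs bridging: the corollary's numerator features $\sum_i w_i Z_i^*Z_i = \sum_i w_i |Z_i|^2$, whereas Theorem~\ref{thm3} has $\bigl(\sum_i w_i |Z_i|\bigr)^2$.

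The key step is the operator convexity of $t \mapsto t^2$ on $[0,\infty)$, which gives
\[
\Bigl(\sum_{i=1}^m w_i |Z_i|\Bigr)^2 \le \sum_{i=1}^m w_i |Z_i|^2 = \sum_{i=1}^m w_i Z_i^*Z_i,
\]
in the Loewner order. Since $|Z_i|^2 \le \max_k r_{ik}^2\,I < I$ for each $i$, both matrices $I - \sum_i w_i Z_i^*Z_i$ and $I - \bigl(\sum_i w_i |Z_i|\bigr)^2$ are positive definite, and the above operator inequality yields $0 < I - \sum_i w_i Z_i^* Z_i \le I - \bigl(\sum_i w_i |Z_i|\bigr)^2$. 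Monotonicity of the determinant on the positive semidefinite cone then gives
\[
\det\Bigl(I - \sum_{i=1}^m w_i Z_i^*Z_i\Bigr) \le \det\Bigl(I - \bigl(\sum_{i=1}^m w_i |Z_i|\bigr)^2\Bigr).
\]
Dividing by $|\det(I - U\sum_i w_i |Z_i|)|^2$ and invoking the right-hand inequality of Theorem~\ref{thm3} applied to $|Z_1|,\dots,|Z_m|$ (which are positive semidefinite with eigenvalues $r_{ik} \in [0,1)$) completes the inequality.

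For the equality case, both ingredients must be tight simultaneously. On the Theorem~\ref{thm3} side, equality forces all $|Z_i|$ to coincide with a common $Z$, and, additionally, $\mathrm{Spec}(UZ)=\mathrm{Spec}(Z)$ (with $U=I$ when $Z$ is nonsingular). Once all $|Z_i|=Z$, the operator-convexity step becomes a trivial identity $Z^2=Z^2$, so it imposes no further constraint; conversely, the standard equality analysis for operator convexity of $t\mapsto t^2$ (via $w_i w_j(|Z_i|-|Z_j|)^2 \ge 0$) shows that tightness there alone already forces $|Z_1|=\cdots=|Z_m|$. Thus the combined equality condition is precisely: all $|Z_i|$ equal some $Z$, and $\mathrm{Spec}(UZ)=\mathrm{Spec}(Z)$.

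The only real obstacle is verifying that the operator-convexity inequality points in the direction compatible with determinant monotonicity, i.e., that it enlarges the numerator of the corollary to the numerator of Theorem~\ref{thm3}; everything else is bookkeeping built on results already established.
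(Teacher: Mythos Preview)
Your proposal is correct and follows exactly the paper's approach: apply the operator inequality $(\sum_i w_i|Z_i|)^2 \le \sum_i w_i|Z_i|^2$ to bridge the numerators, then invoke Theorem~\ref{thm3} for the positive semidefinite matrices $|Z_i|$. Your write-up simply fleshes out the paper's one-line proof, including a fuller treatment of the equality case.
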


 \begin{proof} With $(\sum_{i=1}^m w_i|Z_i|)^2\leq \sum_{i=1}^mw_i|Z_i|^2$
 (see, e.g., \cite[p.~5]{Zha02}), use Theorem \ref{thm3}.
 \end{proof}

In view of (\ref{e4}), it is tempting to have the  lower bound inequality
\begin{eqnarray*}
 \prod_{k=1}^n\prod_{i=1}^m\left(\frac{1-r_{ik}}{1+r_{ik}}\right)^{w_i}\le    \frac{\det(I-\sum_{i=1}^mw_i Z_i^*Z_i)}{|\det(I-U\sum_{i=1}^mw_i|Z_i|)|^2}.
 	  \end{eqnarray*}
However, this is not true. Set $m=n=2$, $w_1=w_2=1/2$ and take
$$Z_1 = \left ( \begin{array}{cc}
 0.34  &   -0.15 \\
   -0.15  &   0.07 \end{array} \right ),\;\;
Z_2 =\left ( \begin{array}{cc}
    0.02  &  -0.01 \\
   -0.01  &   0.01 \end{array} \right ),\;\;
   U =
\left ( \begin{array}{cc}
   -0.60  &  0.80 \\
    0.80   &  0.60
    \end{array} \right ).$$
  One may check that the left hand side is
  $0.6281$,
while the right hand side is
$0.6250$.

 The unitary matrix in Theorem \ref{thm1} seems superfluous, as one could replace $Z$ with $U^*Z$ and leave the singular values unchanged. So perhaps a more natural extension of Theorem~\ref{thm1} to more matrices is giving an upper bound and lower bound, in terms of the singular values of individual matrices, for the quantity ${\displaystyle\frac{\det(I-\sum_{i=1}^mw_i Z_i^*Z_i)}{|\det(I-\sum_{i=1}^mw_i Z_i)|^2}}$, where $Z_i$, $i=1, 2, \ldots, m$, are general contractive matrices.
 We would guess
\begin{equation}\label{open}
\prod_{k=1}^n\prod_{i=1}^m\left(\frac{1-r_{ik}}{1+r_{ik}}\right)^{w_i}\le \frac{\det(I- \sum_{i=1}^mw_i Z_i^*Z_i)}{|\det(I- \sum_{i=1}^mw_iZ_i)|^2}\le  \prod_{k=1}^n\prod_{i=1}^m\left(\frac{1+r_{ik}}{1-r_{ik}}\right)^{w_i}.
\end{equation}
The first inequality in (\ref{open})  is untrue in general as it is disproved by substituting $Z_1$ and $Z_2$ in (\ref{open}) with $U|Z_1|$ and $U|Z_2|$, respectively,  in the previous example.
However, simulation seems to support the second inequality which is   unconfirmed yet.

\subsection*{Acknowledgments} {\small   The work
 was partially supported by National Natural Science Foundation of China (NNSFC) No.~11601314 and No.~11571220.}

\end{document}